\newcommand{\R}{\mathbb{R}}
\newcommand{\Z}{\mathbb{Z}}
\newcommand{\Ker}{\operatorname{Ker}}
\newcommand{\rank}{\operatorname{rank}}
\newcommand{\corank}{\operatorname{corank}}
\newcommand{\codim}{\operatorname{codim}}
\newcommand{\sgn}{\operatorname{sgn}}
\newcommand{\signature}{\operatorname{signature}}
\newcommand{\Aa}{\mathcal{A}}
\newcommand{\inv}{^{-1}}
\newtheorem{theorem}{Theorem}[section]
\newtheorem{lemma}[theorem]{Lemma}
\newtheorem{defin}[theorem]{Definition}
\newtheorem{cor}[theorem]{Corollary}
\newtheorem{prop}[theorem] {Proposition}
\newenvironment{ex}{\medskip \noindent {\bf Example.\ }}{\bigskip}
\newenvironment{proof}{\par\noindent \emph{Proof. }}{\hspace*{\fill}$\Box$\par\medskip}
\title{Cross-cap singularities counted with sign\thanks{%
Iwona~Krzy\.{z}anowska\\
University of Gda\'{n}sk, Institute of Mathematics \\
              80-952 Gda\'{n}sk, Wita Stwosza 57, Poland\\          
              Email: Iwona.Krzyzanowska@mat.ug.edu.pl\\ \\
2000 \emph{Mathematics Subject Classification}:  14P25, 57R45, 57R42, 12Y05\\
\emph{Keywords}: cross-cap, immersion, Stiefel manifold, intersection number, signature\\\\\emph{Supported by National Science Centre, grant 6093/B/H03/2011/40}
}
}
\author{Iwona Krzy\.{z}anowska}
\date{June 2015}
\begin{document}

\def\nothanksmarks{\def\thanks##1{\protect\footnotetext[0]{\kern-\bibindent##1}}}

\nothanksmarks

\maketitle

%
%
%
%
%
%
%
%

\begin{abstract}
There is presented a method for computing the algebraic number of cross-cap singularities for mapping from $m$ dimensional compact manifold with boundary $M\subset \R^m$ into $\R^{2m-1}$, $m$ is odd. As an application, the intersection number of an immersion $g:S^{m-1}(r)\longrightarrow\R^{2m-2}$ is described as the algebraic number of cross-caps of a mapping naturally associated with $g$. 
\end{abstract}


\section{Introduction}


Mappings from the $m$--dimensional, smooth, orientable manifold $M$ into $\R^{2m-1}$ are natural object of study. In \cite{WhitneyUmbrella2},Whitney described typical mappings from $M$ into $\R^{2m-1}$. Those mappings have only isolated critical points, called cross-caps.

According to \cite[Theorem 4.6]{golub},  \cite[Lemma 2]{WhitneyUmbrella}, a mapping $M\to \R^{2m-1}$ has a cross-cap at $p\in M$, if and only if in the local coordinate system near $p$ this mapping has the form $$(x_1,\ldots , x_m)\mapsto(x_1^2,x_2,\ldots , x_m,x_1x_2,\ldots ,x_1x_m).$$

In \cite{WhitneyUmbrella}, for $m$ odd, Whitney presented a method to associate a sign with a cross-cap. Put $\zeta(f)$ -- an algebraic sum of cross-caps of $f:M\longrightarrow\R^{2m-1}$, where $M$ is $m$--dimensional compact orientable manifold. Then according to Whitney, \cite[Theorem 3]{WhitneyUmbrella}, $\zeta(f)=0$, if $M$ is closed. If  $M$ has a boundary, then following Whitney, \cite[Theorem 4]{WhitneyUmbrella}, for a homotopy $f_t:M\rightarrow \R^{2m-1}$ regular in some open neighbourhood of $\partial M$, if the only singular points of $f_0$ and $f_1$ are cross-caps, then $\zeta(f_0)=\zeta(f_1)$. Moreover arbitrary close to any mapping $h:M\rightarrow \R^{2m-1}$, there is mapping regular near boundary, with only cross-caps as singular points (see \cite{WhitneyUmbrella}). In the case where $m$ even, it is impossible to associate sign with cross-cap in the same way as in the odd case, but if $m$ is even, it is enough to consider number of cross-caps $\operatorname{mod} 2$, to get similar results (see \cite{WhitneyUmbrella}).

In \cite{KrzyzSzafr}, the authors studied a mapping $\alpha$ from a compact and oriented $(n-k)$--manifold $M$ into the Stiefel manifold $\widetilde{V}_k(\R^n)$, for $n-k$ even. They constructed a mapping $\widetilde{\alpha}:S^{k}\times M\rightarrow \R^n\setminus\{0\}$ associated with $\alpha$, and defined $\Lambda(\alpha)$ as half of topological degree of $\widetilde{\alpha}$. In case $M=S^{n-k}$, they showed that $\Lambda(\alpha)$ corresponds with the class of $\alpha$ in $\pi_{n-k}\widetilde{V}_k(\R^n)\simeq\Z$. 

According to  \cite{KrzyzSzafr}, in the case where $M\subset\R^{n-k+1}$ is an algebraic hypersurface, $\alpha$ is polynomial, with some more assumptions $\Lambda(\alpha)$ can be presented as a sum of signatures of two quadratic forms defined on $\R[x_1,\ldots ,x_{n-k+1}]$. And so, easily computed.

In this paper we prove that in the case where $m$ is odd,  for $f:(M,\partial M)\longrightarrow \R^{2m-1}$, where $M\subset \R^m$, $\zeta(f)$ can be expressed  as $\Lambda(\alpha)$, for some $\alpha$ associated with $f$. And so, with some more assumptions, $\zeta(f)$  can be easily computed for polynomial mapping $f$. Moreover we present a method that can be used  to check effectively that $f$ has only cross-caps as singular points.

Take a smooth map $g:\R^m\rightarrow\R^{2m}$, let us assume that $g|_{S^{m-1}}$ is an immersion. In this paper we show that the intersection number $I(g|_{S^{m-1}})$ of immersion $g|_{S^{m-1}}$,  can be presented as an algebraic sum of cross-caps of the mapping $(\omega,g)|{B^m}$, where $\omega$ is sum of squares of coordinates.

Take $f:(\R^m,0)\longrightarrow\R^{2m-1}$ with cross-cap at $0$. In \cite{IkegamiSaeki}, Ikegami and Saeki defined the sign of a cross-cap singularity for mapping $f$ as the intersection number of immersion $f|_S:S=f\inv(S^{2m-2}(\epsilon))\longrightarrow S^{2m-2}(\epsilon)$, for $\epsilon$ small enough. It is easy to see that this definition complies with Whitney definition from \cite{WhitneyUmbrella}. In \cite{IkegamiSaeki}, the authors showed that for generic map (in sense of \cite{IkegamiSaeki}) $g:(\R^m,0)\longrightarrow\R^{2m-1}$, the number of Whitney umbrellas appearing in a $C^{\infty}$ stable perturbation of $g$, counted with signs, is an invariant of the topological $\mathcal{A}_+$-- equivalence
class of $g$, and is equal to the intersection number of $g|_S:S=g\inv(S^{2m-2}(\epsilon))\longrightarrow S^{2m-2}(\epsilon)$.
Using our methods this number can be easily computed for polynomial mappings.

We use notation $S^n(r)$, $B^n(r)$, $\bar{B}^n(r)$ for sphere, open ball, closed ball (resp.) centred at the origin of radios $r$ and dimension $n$. If we omit symbol $r$, we assume that $r=1$.

\section{Cross-cap singularities}\label{sekcjaKiedy}

Let $M,N$ be smooth manifolds. Take a smooth mapping $f:M\longrightarrow N$.

\begin{lemma}\label{oTranswersalnosci}
 
Let $W$ be a submanifold of $N$. Take $p\in M$ such that $f(p)\in W$. Let us assume that there is a neighbourhood  $U$ of $f(p)$ in $N$ and a smooth mapping $\phi\colon U \to \R ^s$ such that $\rank D\phi(f(p)) = k=\codim W$ and $W\cap U=\phi \inv (0)$. Then $f\pitchfork W$ at  $p$  if and only if $\rank D(\phi\circ f)(p)=k$.     
\end{lemma}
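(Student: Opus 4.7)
The plan is to reduce transversality, which is a statement about a direct sum decomposition at $T_{f(p)}N$, to the rank condition on the composed map $\phi\circ f$, by pushing everything forward through $D\phi(f(p))$.

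The first step is a dimension count. Since $\operatorname{rank} D\phi(f(p)) = k$, the kernel $\Ker D\phi(f(p)) \subset T_{f(p)}N$ has dimension $\dim N - k$. On the other hand, since $W\cap U = \phi\inv(0)$, differentiating $\phi|_W \equiv 0$ gives $T_{f(p)}W\subset \Ker D\phi(f(p))$; but $\dim T_{f(p)}W = \dim N - k$ as well, so I get the key identity
\[
T_{f(p)}W \;=\; \Ker D\phi(f(p)).
\]
This is the only non-tautological input; everything else is chasing linear algebra.

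Next, for the forward direction, I would assume $f\pitchfork W$ at $p$, i.e.\ $T_{f(p)}N = T_{f(p)}W + Df(p)(T_pM)$, and apply $D\phi(f(p))$ to both sides. The left-hand side maps onto the image of $D\phi(f(p))$, which has dimension $k$. On the right-hand side, the summand $T_{f(p)}W$ is killed by the key identity, so the image is exactly $D(\phi\circ f)(p)(T_pM)$. This forces $\rank D(\phi\circ f)(p)=k$.

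For the converse, I would assume $\rank D(\phi\circ f)(p)=k$, so $D(\phi\circ f)(p)(T_pM)$ fills the entire image of $D\phi(f(p))$. Given $v\in T_{f(p)}N$, I can then find $w\in T_pM$ with $D\phi(f(p))(Df(p)(w))=D\phi(f(p))(v)$; the difference $v-Df(p)(w)$ lies in $\Ker D\phi(f(p)) = T_{f(p)}W$, yielding the decomposition $v=(v-Df(p)(w))+Df(p)(w)$ required for transversality. There is no real obstacle here beyond noticing that $\rank D\phi(f(p))=k=\codim W$ forces $T_{f(p)}W$ and $\Ker D\phi(f(p))$ to coincide rather than merely be nested; once that is in hand, the lemma is essentially a one-line diagram chase in each direction.
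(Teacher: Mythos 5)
Your proof is correct and follows essentially the same route as the paper: both reduce the lemma to the identity $T_{f(p)}W=\Ker D\phi(f(p))$ and then translate transversality into the statement that $D(\phi\circ f)(p)$ has full rank $k$. Your write-up is, if anything, slightly more careful, since you justify that identity by a dimension count where the paper merely asserts it.
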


\begin{proof} Of course $\Ker D\phi(f(p))=T_{f(p)}W$, so $\dim T_{f(p)}N=\dim \Ker D\phi(f(p))+k$. Then:
\[ f\pitchfork W \mbox{ at } p\ \Leftrightarrow \ T_{f(p)}N=T_{f(p)}W+Df(p)T_pM\ \Leftrightarrow \]   \[ \Leftrightarrow \ T_{f(p)}N=\Ker D\phi(f(p))+Df(p)T_pM.\]
This equality holds if and only if there exist vectors $v_1,\ldots , v_k\in Df(p)T_pM$, such that any nontrivial combination of $v_1,\ldots ,v_k$ is outside the $\Ker D\phi(f(p))$ and so $\rank D\phi(f(p))\left[v_1\ldots v_k\right] =k$. We get that $f\pitchfork W$ at  $p$ if and only if $\rank D(\phi \circ f)(p)=k .$
\end{proof}

By $j^1f$ we mean the canonical mapping associated with $f$, from $M$ into the spaces of $1$--jets $J^1(M,N)$.
We say that $f$ is $1$--generic, if $j^1f\pitchfork S_r$, for $r\geq 0$, where $S_r=\{\sigma\in J^1(M,N)\ |\ \corank \sigma=r\}$.

Let us assume that $M$ and $N$ are manifolds of dimension $m$ and $2m-1$ respectively. In this case (see \cite{golub}) $\codim S_r=r^2+r(m-1)$, and so $\codim S_1=m$ and $\codim S_r>m$, for $r\geq 2$. So $f$ is $1$--generic if and only if $f\pitchfork S_1$ and $S_r(f)=\emptyset$ for $r\geq 2$.

\begin{defin}
We say that $p\in S_1(f)$ is a cross-cap singularity of  $f$, if $j^1f\pitchfork S_1$ at $p$. 
\end{defin}

According to  \cite{WhitneyUmbrella2,WhitneyUmbrella}, a point $p$ is a cross-cap of $f:M\longrightarrow \R^{2m-1}$, if there is a coordinate system near $p$, such that \begin{equation} \label{warunek1}
\frac{\partial f}{\partial x_1}(p)=0
\end{equation} and vectors \begin{equation}\label{wektory}
\frac{\partial^2 f}{\partial x_1^2}(p),\frac{\partial f}{\partial x_2}(p),\ldots ,\frac{\partial f}{\partial x_m}(p),\frac{\partial^2 f}{\partial x_1\partial x_2}(p),\ldots , \frac{\partial^2 f}{\partial x_1\partial x_m}(p)
\end{equation} are linearly independent. According to  \cite[Section 2]{WhitneyUmbrella2}, if $p$ is a cross-cap singularity and (\ref{warunek1}) holds, then vectors (\ref{wektory}) are linearly independent.\\
This two above definitions of a cross-cap singularity are equivalent (see \cite{golub}, \cite{WhitneyUmbrella}). 
%

Take $f:\R^m\longrightarrow \R^{2m-1}$. Put $\mu:\R^m\longrightarrow\R^s$ such that  $\mu(x)$ is given by all the $m$--minors of $Df(x)$. Of course $s=\binom{2m-1}{m}$. 
\begin{lemma}\label{kiedyCrossCup}
A point $p\in \R^m$ is a cross-cap singularity of $f$ if and only if $\rank Df(p)=m-1$ and $\rank D\mu(p)=m$.
\end{lemma}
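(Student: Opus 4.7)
The plan is to apply Lemma~\ref{oTranswersalnosci} with $W = S_1$ inside $N = J^1(\R^m,\R^{2m-1})$, taking $\phi\colon J^1(\R^m,\R^{2m-1})\to\R^s$ to be the map that sends a $1$-jet to the $s=\binom{2m-1}{m}$ $m$-minors of its Jacobian part. Since $\phi$ depends only on the Jacobian component, $\phi\circ j^1f = \mu$, so once the hypotheses of the lemma are verified, the transversality condition $j^1f\pitchfork S_1$ at $p$ will become $\rank D\mu(p)=m$. Together with $\rank Df(p)=m-1$, which is simply the condition $\corank Df(p)=1$, i.e.\ $p\in S_1(f)$, this is exactly the characterization of a cross-cap singularity, so the whole task reduces to checking the two hypotheses of Lemma~\ref{oTranswersalnosci} at $\sigma=j^1f(p)$.

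Those hypotheses are (a) $\rank D\phi(\sigma)=m=\codim S_1$, and (b) $\phi\inv(0)\cap U = S_1\cap U$ for some neighbourhood $U$ of $\sigma$. For (a) I would choose coordinates in source and target so that $Df(p)$ takes the canonical form
\[
Df(p)=\begin{pmatrix} I_{m-1} & 0 \\ 0 & 0 \end{pmatrix}
\]
and linearize the $m$-minors at this matrix. An inspection shows that a minor indexed by a row set $I\subset\{1,\dots,2m-1\}$ with $|I|=m$ contributes a nonzero differential only when $I=\{1,\dots,m-1,m+j-1\}$ for some $j\in\{1,\dots,m\}$: any $I$ using at most $m-2$ of the first $m-1$ rows gives a submatrix of rank at most $m-2$, so replacing a single column by an arbitrary perturbation still yields rank less than $m$ and hence determinant zero. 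In the remaining $m$ cases the differential evaluated on a perturbation $B$ reads $\pm B_{m+j-1,m}$, and these $m$ linear functionals are manifestly independent, so $\rank D\phi(\sigma)=m$. Condition (b) is then essentially free: globally $\phi\inv(0)=\bigcup_{r\ge 1}S_r$, but as soon as (a) holds, $\phi\inv(0)$ is locally a smooth submanifold of codimension $m$ near $\sigma$; since it contains the smooth codimension-$m$ submanifold $S_1$, the two must coincide on a neighbourhood of $\sigma$.

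The principal technical step is the rank computation in (a); once it is in place, Lemma~\ref{oTranswersalnosci} delivers the claim with no further work. I anticipate no other obstacle, since everything else is a direct substitution and the reduction $\phi\circ j^1f=\mu$ is immediate from the construction of $\mu$.
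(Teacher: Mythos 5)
Your proposal is correct and follows essentially the same route as the paper: both apply Lemma~\ref{oTranswersalnosci} to the minors map $\phi$ on $J^1(\R^m,\R^{2m-1})$ and verify $\rank D\phi(j^1f(p))=m$ by linearizing the $m$-minors at the rank-$(m-1)$ matrix $Df(p)$. The only differences are cosmetic: the paper normalizes by assuming the top-left $(m-1)\times(m-1)$ block of $Df(p)$ is invertible rather than passing to the full canonical form (a reduction you should justify by noting that the rank of $D\phi$ is constant on $GL\times GL$-orbits), and it obtains $\phi\inv(0)\cap U=S_1\cap U$ directly from lower semicontinuity of rank instead of your containment argument for equidimensional submanifolds.
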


\begin{proof} A point $p$ is a cross-cap singularity if and only if $p\in S_1(f)$ and $j^1f\pitchfork S_1$ at $p$. Note that $p\in S_1(f)$  if and only if $\rank Df(p)=m-1$.

Of course $J^1(\R^m,\R^{2m-1})\cong\R^m\times\R^{2m-1}\times M(2m-1,m)$, where $M(2m-1,m)$ is a space of real matrices of dimension  $(2m-1)\times m$.  Take $U$ -- an open neighbourhood of $j^1f(p)$ in $J^1(\R^m,\R^{2m-1})$, and a mapping
$$\phi :U\longrightarrow \R^s,$$ where $\phi(x,y,[a_{ij}])$ is given by all $m$--minors of $[a_{ij}]$.
We may assume that $$\det\dfrac{\partial (f_1,\ldots , f_{m-1})}{\partial (x_1,\ldots ,x_{m-1})}(p)\neq 0.$$
Put $A=[a_{ij}]$, for $1\leq i,j\leq m-1$, then for $U$ small enough, $\det A\neq 0$. Let $M_i$ be the determinant of submatrix of  $[a_{ij}]$ composed of first $m-1$ rows and row number $(m+i-1)$, for $i=1,\ldots ,m$. Then $$M_i=(-1)^{2m-1+i}\det A \cdot a_{m+i-1,m}+ b_i,$$ for $i=1,\ldots,m$ and $b_i$ does not depend on $a_{mm},\ldots ,a_{2m-1,m}$, and so $$\rank\frac{\partial(M_1,\ldots , M_m)}{\partial(a_{m,m},\ldots , a_{2m-1,m})}=m.$$ 

We get that $$\rank D\phi(j^1f(p))\geq m.$$ Let us recall that $\codim S_1=m$. We can choose $U$ small enough such that  $$\phi \inv (0)=U\cap S_1.$$ So we get that $\rank D\phi(j^1f(p))=\codim S_1 =m$. Of course $\phi\circ j^1f =\mu$ in the small neighbourhood of $p$. According to Lemma \ref{oTranswersalnosci}, $j^1f\pitchfork S_1$ at $p$ if and only if $\rank D\mu(p)=m$.

\end{proof}

\section{Algebraic sum of cross-cap singularities}\label{sekcjaSuma}

Let $m$ be odd. Take a smooth mapping $f:M\longrightarrow\R^{2m-1}$ and  let $p\in M$ be a cross-cap of $f$. According to \cite{WhitneyUmbrella}, $p$ is called positive (negative) if  the vectors (\ref{wektory}) determine the negative  (positive) orientation of $\R^{2m-1}$. According to \cite[Lemma 3]{WhitneyUmbrella}, this definition does not depend on choosing the coordinate system on $M$.

Let us assume, that $f:\R^m\longrightarrow\R^{2m-1}$ is a smooth mapping such that $0$ is a cross-cap of $f$. Of course it is an isolated critical point of $f$. Denote by $v_i$ the i--th column of $Df$, for $i=1,\ldots ,m$. There exists $r>0$ such that $v_1(x),\ldots ,v_m(x)$ are independent on $\bar{B}^m(r)\setminus \{0\}$. Following \cite{KrzyzSzafr} we can define $$\widetilde{\alpha}(\beta,x)=(\beta_1v_1(x)+\ldots +\beta_mv_m(x))=Df(x)(\beta):S^{m-1}\times \bar{B}^m(r)\longrightarrow \R^{2m-1}.$$ Then the topological degree of the mapping $$\widetilde{\alpha}|_{S^{m-1}\times S^{m-1}(r)}:S^{m-1}\times S^{m-1}(r)\longrightarrow\R^{2m-1}\setminus \{0\}$$is well defined. By \cite[Proposition 2.4]{KrzyzSzafr}, $\deg(\widetilde{\alpha}|_{S^{m-1}\times S^{m-1}(r)})$ is even. 

\begin{theorem}\label{oParasolu} Let $m$ be odd. If $0$ is a cross-cap of a mapping $f:\R^m\longrightarrow\R^{2m-1}$, then it is positive if and only if $\frac{1}{2}\deg(\widetilde{\alpha}|_{S^{m-1}\times S^{m-1}(r)})=-1$, and so it is negative if and only if $\frac{1}{2}\deg(\widetilde{\alpha}|_{S^{m-1}\times S^{m-1}(r)})=+1$.
\end{theorem}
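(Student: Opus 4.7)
My plan is to reduce to Whitney's cross-cap normal form $f_0(x)=(x_1^2,x_2,\ldots,x_m,x_1x_2,\ldots,x_1x_m)$ and check the identity directly for this model; invariance under the diffeomorphisms that bring any cross-cap to $f_0$ then gives the general case. For $f_0$, the vectors of (\ref{wektory}) at $0$ are $2e_1,e_2,\ldots,e_m,e_{m+1},\ldots,e_{2m-1}$, the positively oriented standard basis of $\R^{2m-1}$; so by Whitney's convention $f_0$ has a \emph{negative} cross-cap at $0$. The assertion of the theorem for $f_0$ is thus $\tfrac12\deg(\widetilde{\alpha}_{f_0}|_{S^{m-1}\times S^{m-1}})=+1$.

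For the direct computation of this degree I would write out
\[\widetilde{\alpha}_{f_0}(\beta,x)=(2x_1\beta_1,\beta_2,\ldots,\beta_m,\beta_1x_2+\beta_2x_1,\ldots,\beta_1x_m+\beta_mx_1)\]
and take the regular value $y^*=(2,0,\ldots,0)$. The equations $\widetilde{\alpha}_{f_0}(\beta,x)=y^*$ force $\beta_i=0$ and $x_i=0$ for $i\ge 2$ and $x_1\beta_1=1$, leaving exactly two preimages on $S^{m-1}\times S^{m-1}$: $(e_1,e_1)$ and $(-e_1,-e_1)$. In local charts $(\beta_2,\ldots,\beta_m,x_2,\ldots,x_m)$ about each preimage (with $\beta_1,x_1$ expressed as $\pm\sqrt{1-\cdots}$), the augmented matrix $[D\widetilde{\alpha}_{f_0}\ |\ y^*]$ has determinant $2$. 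For $m$ odd the south-pole parametrization of $S^{m-1}$ reverses its induced orientation, so the south--south preimage picks up a correction $(-1)^m\cdot(-1)^m=+1$, while the north--north one carries no correction. Both preimages thus contribute $+1$, giving $\deg=2$ and $\tfrac12\deg=+1$ as required.

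To transfer the conclusion from $f_0$ to a general $f$, I would use invariance of both quantities under the diffeomorphisms of source and target realising the normal form. Whitney's \cite[Lemma~3]{WhitneyUmbrella} shows that the sign is source-diffeomorphism invariant and flips under target-orientation reversal by $\sgn\det D\psi(0)$. The chain rule shows $\deg(\widetilde{\alpha}_f)$ transforms the same way: post-composition by $\psi$ multiplies $\widetilde{\alpha}$ by $D\psi$, contributing $\sgn\det D\psi(0)$, while pre-composition by $\varphi$ replaces $(\beta,x)$ by $(D\varphi(x)\beta,\varphi(x))$ on $S^{m-1}\times S^{m-1}$, a self-map of degree $(\sgn\det D\varphi(0))^2=+1$. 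Hence the relation ``$p$ negative $\Leftrightarrow\tfrac12\deg=+1$'' is preserved by every local diffeomorphism, and the normal-form verification then yields the theorem in general.

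The main obstacle I expect is the orientation bookkeeping -- both in the Jacobian computation at the south-pole preimage (where the parity of $m$ enters critically) and in the source-diffeomorphism invariance argument, where the effect on the $\beta$-factor (through the derivative $D\varphi$) and on the $x$-factor (through $\varphi$ itself) must be tracked independently before their contributions combine into the squared sign $(\sgn\det D\varphi(0))^2$.
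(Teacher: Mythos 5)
Your argument is correct, but it takes a genuinely different route from the paper's. You reduce to Whitney's normal form $f_0(x)=(x_1^2,x_2,\ldots,x_m,x_1x_2,\ldots,x_1x_m)$, verify the identity there by counting preimages of a ray, and then transport both sides of the biconditional through the source and target diffeomorphisms furnished by the normal--form theorem. The paper never invokes the normal form: it performs only a \emph{linear} change of source coordinates $\phi$ to achieve $\frac{\partial f}{\partial x_1}(0)=0$, observes $\widetilde{\gamma}=\widetilde{\alpha}\circ(\phi\times\phi)$ with $\phi\times\phi$ orientation--preserving, and then computes $\deg\widetilde{\gamma}$ directly at an \emph{arbitrary} cross-cap: the zero set of $\widetilde{\gamma}$ is exactly the two points $(\pm1,0,\ldots,0;0)$, and the local Jacobians $A_1$, $A_2$ are built from the very vectors~(\ref{wektory}), giving $\frac12\deg=\sgn\det A_1$ at once (here $m$ odd makes the two contributions agree and makes the column reordering even). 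Your computation of the model case checks out (both preimages contribute $+1$, the normal form is a negative cross-cap under Whitney's convention, so $\frac12\deg=+1$, consistent with the paper's formula $\sgn\det A_1=\sgn\det[e_2,\ldots,e_m,2e_1,e_{m+1},\ldots,e_{2m-1}]=+1$ for $m$ odd), and your invariance bookkeeping is right: the vectors~(\ref{wektory}) transform exactly by $D\psi(f(p))$ because the second--derivative correction terms contain the factor $\frac{\partial f}{\partial x_1}(p)=0$, while pre-composition contributes $(\sgn\det D\varphi(0))^2=+1$ to the degree. What your route buys is a conceptually transparent ``check one model, then push forward'' argument; what it costs is reliance on the full normal--form theorem (a substantial input) plus two separate invariance verifications, and the minor technical point --- which the paper handles explicitly by shrinking $r$ --- that $\varphi$ does not carry spheres to spheres, so one must use stability of the degree on small tori around the isolated zero. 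The paper's direct computation is more self-contained and, as a bonus, produces the explicit formula $\frac12\deg=\sgn\det A_1$ in terms of first and second derivatives at the singular point, which is what feeds the algorithmic results of Section~\ref{ex}.
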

\begin{proof} We can find linear coordinate system $\phi:\R^m\longrightarrow\R^m$, such that $\phi(0)=0$ and $f\circ \phi$ fulfills condition (\ref{warunek1}) at $0$.  Denote by $A$ the matrix of $\phi$. Let $w_1,\ldots , w_m$ denote columns of $D(f\circ \phi)$. Then $w_1(0)=0$ and vectors \begin{equation}\label{wektory2}
\frac{\partial w_1}{\partial x_1}(0),w_2(0),\ldots ,w_m(0),\frac{\partial w_1}{\partial x_2}(0),\ldots , \frac{\partial w_1}{\partial x_m}(0) 
\end{equation} are linearly independent (see \cite[Section 2]{WhitneyUmbrella2}).
Put $\widetilde{\gamma}(\beta,x)=(\beta_1w_1(x)+\ldots +\beta_mw_m(x)):S^{m-1}\times \bar{B}^{m}(r)\longrightarrow\R^{2m-1}$.  We can assume that $r$ is such that $\widetilde{\gamma}\neq 0$ on $S^{m-1}\times \bar{B}^{m}(r)\setminus\{0\}$.  Let us see that $$\widetilde{\gamma}(\beta ,x)=D(f\circ \phi)(x)\cdot \left [
\begin{array}{c}
\beta_1\\
\vdots\\
\beta_m
\end{array} \right ]=Df(\phi(x))\cdot A\cdot \left [
\begin{array}{c}
\beta_1\\
\vdots\\
\beta_m
\end{array} \right ]=Df(\phi(x))\cdot \left [
\begin{array}{c}
\phi_1(\beta)\\
\vdots\\
\phi_m(\beta)
\end{array} \right ].$$ So $\widetilde{\gamma}=\widetilde{\alpha}\circ(\phi\times \phi)$. It is easy to see that $\phi\times\phi$ preserve the orientation of $S^{m-1}\times S^{m-1}(r)$. We can assume that $r>0$ is so small, that $\deg(\widetilde{\alpha}|_{S^{m-1}\times S^{m-1}(r)})=\deg(\widetilde{\alpha}|_{\phi(S^{m-1})\times\phi( S^{m-1}(r))})$. So we get that $$\deg(\widetilde{\gamma}|_{S^{m-1}\times S^{m-1}(r)})=\deg(\widetilde{\alpha}|_{\phi(S^{m-1})\times\phi( S^{m-1}(r))})\deg(\phi\times \phi)=\deg(\widetilde{\alpha}|_{S^{m-1}\times S^{m-1}(r)}).$$

On the other hand $\deg(\widetilde{\gamma}|_{S^{m-1}\times S^{m-1}(r)})=\deg(\widetilde{\gamma},0,S^{m-1}\times \bar{B}^m(r))$. Since $f\circ \phi$ fulfils  (\ref{warunek1}), vectors $w_2,\ldots ,w_m$ are independent on $\bar{B}^m(r)$. Let us see that $\widetilde{\gamma}=0$ on $S^{m-1}\times \bar{B}^m(r)$ if and only if $x=0$ and $\beta=(\pm 1,0,\ldots ,0)$. So $\deg(\widetilde{\gamma}|_{S^{m-1}\times S^{m-1}(r)})$ is a sum of the local topological degree of $\widetilde{\gamma}$ at $(1,0,\ldots ,0;0,\ldots ,0)$ and at $(-1,0,\ldots ,0;0,\ldots ,0)$. 

Near $(1,0,\ldots ,0;0,\ldots ,0)$ the well--oriented parametrisation of $S^{m-1}\times \bar{B}^m(r)$ is given by $$(\beta_2,\ldots , \beta_m;x)=(\sqrt{1-\beta_2^2-\ldots -\beta_m^2},\beta_2,\ldots , \beta_m;x).$$ And then the derivative matrix of $\widetilde{\gamma}$ at $(1,0,\ldots ,0;0,\ldots ,0)$ has a form $$ A_1=\left [
\begin{array}{cccccc}
w_2(0)&\ldots &w_m(0)&\frac{\partial w_1}{\partial x_1}(0)&\ldots &\frac{\partial w_1}{\partial x_m}(0)
\end{array} \right ].$$
Near $(-1,0,\ldots ,0;0,\ldots ,0)$ the well--oriented parametrisation  of $S^{m-1}\times \bar{B}^m(r)$ is given by $$(\beta_2,\ldots , \beta_m;x)=(-\sqrt{1-\beta_2^2-\ldots -\beta_m^2},-\beta_2,\ldots , \beta_m;x).$$ And then the derivative matrix of $\widetilde{\gamma}$ at $(-1,0,\ldots ,0;0,\ldots ,0)$ has a form $$ A_2=\left [
\begin{array}{cccccc}
-w_2(0)&\ldots &w_m(0)&-\frac{\partial w_1}{\partial x_1}(0)&\ldots &-\frac{\partial w_1}{\partial x_m}(0)
\end{array} \right ].$$

Let us recall that $m$ is odd. System of vectors (\ref{wektory2}) is independent, so $0$ is a regular value of $\widetilde{\gamma}$, and $$\frac{1}{2}\deg(\widetilde{\gamma}|_{S^{m-1}\times S^{m-1}(r)})=\frac{1}{2}(\sgn\det A_1+\sgn\det A_2)=\sgn\det A_1.$$ Moreover $0$ is a positive cross-cap if and only if vectors (\ref{wektory2}) determine negative orientation of a $\R^{2m-1}$, i.e  if and only if $\frac{1}{2}\deg(\widetilde{\alpha}|_{S^{m-1}\times S^{m-1}(r)})=-1$.
\end{proof}

Let $U\subset\R^{m}$ be an open bounded set and $f:\overline{U}\longrightarrow \R^{2m-1}$ be smooth. We say that $f$ is generic if only critical points of $f$ are cross-caps and $f$ is regular in the neighborhood of $\partial U$. Let us  denote by $\zeta(f)$ the algebraic sum of cross-caps of $f$.

\begin{prop}\label{oParasolach}
Let $U\subset \R^m$, ($m$ is odd), be a bounded $m$--dimensional manifold such that $\overline{U}$ is an $m$--dimensional manifold with a boundary. For $f:\overline{U}\subset \R^m\longrightarrow\R^{2m-1}$ generic, $\zeta(f)=-\frac{1}{2}\deg(\widetilde{\alpha})$, 
where $\widetilde{\alpha}(\beta,x)=Df(x)(\beta):S^{m-1}\times \partial U \longrightarrow \R^{2m-1}\setminus\{0\}$.

\end{prop}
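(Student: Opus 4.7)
The plan is to reduce the global identity to the local computation carried out in Theorem~\ref{oParasolu} via the additivity (excision) property of the Brouwer degree. Since $f$ is generic, it is regular in a neighbourhood of $\partial U$, so the columns of $Df(x)$ remain linearly independent there; consequently $\widetilde{\alpha}(\beta,x)=Df(x)\beta$ does not vanish on $S^{m-1}\times \partial U$, and the same formula defines a smooth extension $\widetilde{\alpha}\colon S^{m-1}\times\overline{U}\to \R^{2m-1}$. Moreover, since $f$ has only cross-cap critical points and each cross-cap $p$ satisfies $\rank Df(p)=m-1$, the kernel $\Ker Df(p)$ is a line meeting $S^{m-1}$ in two antipodal points $\pm\beta_p$. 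Hence $\widetilde{\alpha}^{-1}(0)\cap (S^{m-1}\times U)$ is the finite set $\{(\pm\beta_p,p) : p \text{ a cross-cap of } f\}$.

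Next, for each cross-cap $p$ choose disjoint open balls $V_p\subset U$ so small that $\widetilde{\alpha}$ has no zero on $S^{m-1}\times \overline{V_p}$ apart from $(\pm\beta_p,p)$, and so small that Theorem~\ref{oParasolu} applies after a linear change of coordinates centred at $p$. Regarding $S^{m-1}\times \overline{U}$ as an oriented manifold with boundary, the restriction of $\widetilde{\alpha}$ to the complement $S^{m-1}\times(\overline{U}\setminus\bigcup_p V_p)$ is nowhere zero, so by the standard additivity of the topological degree
\[
\deg(\widetilde{\alpha}|_{S^{m-1}\times\partial U})=\sum_{p}\deg\bigl(\widetilde{\alpha}|_{S^{m-1}\times\partial V_p}\bigr),
\]
where both sides use the natural product orientation and $\partial V_p$ carries its boundary orientation as in Theorem~\ref{oParasolu}.

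Finally, Theorem~\ref{oParasolu} applied at each cross-cap $p$ yields
\[
\tfrac{1}{2}\deg\bigl(\widetilde{\alpha}|_{S^{m-1}\times\partial V_p}\bigr)=-\sgn(p),
\]
where $\sgn(p)=+1$ if $p$ is positive and $-1$ if $p$ is negative. Summing over all cross-caps gives $\deg(\widetilde{\alpha}|_{S^{m-1}\times\partial U})=-2\zeta(f)$, which is the claimed formula.

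The main point requiring care is the orientation bookkeeping: one must check that the product orientation on each local boundary $S^{m-1}\times\partial V_p$ agrees with the one implicitly used in Theorem~\ref{oParasolu} (where the role of $\partial V_p$ is played by $S^{m-1}(r)$ with its outward-pointing orientation as boundary of $\overline{B}^{m}(r)$), and that this same convention appears on both sides of the additivity formula. Once these choices are made consistently, the linear coordinate change reducing $p$ to the origin preserves orientation on the $x$-factor, so Theorem~\ref{oParasolu} directly supplies the local contribution and no further computation is needed.
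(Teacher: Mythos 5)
The paper states this proposition without any proof, treating it as a direct consequence of Theorem~\ref{oParasolu}. Your argument --- extending $\widetilde{\alpha}$ over $S^{m-1}\times\overline{U}$, identifying its zero set as the finite collection of pairs $(\pm\beta_p,p)$ over the cross-caps, and combining degree additivity with the local computation of Theorem~\ref{oParasolu} (applied after translating $p$ to the origin) --- is correct and is evidently the intended route, with the orientation conventions handled consistently.
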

\begin{prop}\label{oParasolachGeneralnie}
Let $U\subset \R^m$, ($m$ is odd), be a bounded $m$--dimensional manifold  such that $\overline{U}$ is an $m$--dimensional manifold with a boundary. Take $h:\overline{U}\subset \R^m\longrightarrow\R^{2m-1}$  a smooth mapping  such that $h$ is regular in a neighborhood of $\partial U$. Then for every generic $f:\overline{U}\subset \R^m\longrightarrow\R^{2m-1}$ close enough to $h$ in $C^1$--topology we have, $\zeta(f)=-\frac{1}{2}\deg(\widetilde{\alpha})$, where $\widetilde{\alpha}(\beta,x)=Dh(x)(\beta):S^{m-1}\times \partial U \longrightarrow \R^{2m-1}\setminus\{0\}$.
\end{prop}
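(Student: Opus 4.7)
The plan is to reduce this to Proposition~\ref{oParasolach} by a homotopy invariance argument for the degree, with the straight-line homotopy between $Df$ and $Dh$. First, since $h$ is regular in a neighborhood $V$ of $\partial U$, the linear map $Dh(x):\R^m\to\R^{2m-1}$ is injective for each $x\in V$, so $\widetilde{\alpha}(\beta,x)=Dh(x)(\beta)$ is nowhere zero on $S^{m-1}\times\partial U$ and $\deg(\widetilde{\alpha})$ is well defined.

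Next, since $\partial U$ is compact and the set of rank-$m$ matrices is open in $M(2m-1,m)$, there exists $\varepsilon>0$ such that every smooth map $g:\overline{U}\to\R^{2m-1}$ satisfying $\|Dg-Dh\|_{C^0(\partial U)}<\varepsilon$ is regular along $\partial U$. Whitney's approximation result (cited in the introduction) provides generic $f$ arbitrarily $C^1$-close to $h$, so we may assume $\|Df-Dh\|_{C^0(\partial U)}<\varepsilon$. Crucially, by convexity, every map on the segment $f_t:=(1-t)f+th$, $t\in[0,1]$, satisfies the same bound, so each $Df_t(x)$ has rank $m$ on $\partial U$. Applying Proposition~\ref{oParasolach} to the generic map $f$ yields $\zeta(f)=-\tfrac{1}{2}\deg(\widetilde{\alpha}_f)$, where $\widetilde{\alpha}_f(\beta,x)=Df(x)(\beta)$.

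Finally, define the homotopy
\[
H_t(\beta,x)=Df_t(x)(\beta),\qquad (\beta,x)\in S^{m-1}\times\partial U,\ t\in[0,1].
\]
Since $Df_t(x)$ is injective on $\R^m$ for each $(t,x)$, we have $H_t(\beta,x)\neq 0$ for every $t$ and every $\beta\in S^{m-1}$. Hence $H_t$ is a homotopy of mappings $S^{m-1}\times\partial U\to\R^{2m-1}\setminus\{0\}$ connecting $\widetilde{\alpha}_f=H_0$ with $\widetilde{\alpha}=H_1$, and homotopy invariance of the topological degree gives $\deg(\widetilde{\alpha}_f)=\deg(\widetilde{\alpha})$, which combined with the identity from the previous paragraph yields the claim. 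The only genuine subtlety is propagating the $C^1$-closeness from $f$ to the whole segment $f_t$; this is handled cleanly because the convex combination of matrices within $\varepsilon$ of $Dh(x)$ remains within $\varepsilon$ of $Dh(x)$, so no iterative or quantitative refinement of the approximation is needed.
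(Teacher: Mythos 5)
Your argument is correct. Note that the paper itself states Proposition~\ref{oParasolachGeneralnie} (like Proposition~\ref{oParasolach}) without proof, so there is nothing to compare against line by line; your reduction --- apply Proposition~\ref{oParasolach} to the generic $f$, then transfer the degree from $\widetilde{\alpha}_f$ to $\widetilde{\alpha}$ via the straight-line homotopy $H_t(\beta,x)=\bigl((1-t)Df(x)+tDh(x)\bigr)\beta$, kept away from $0$ by the convexity-plus-compactness choice of $\varepsilon$ --- is the standard and evidently intended route, and you correctly identify the one genuine subtlety (that a convex combination of two injective matrices need not be injective, which is why you must keep the whole segment inside the $\varepsilon$-ball of rank-$m$ matrices around $Dh(x)$ rather than merely noting that both endpoints have rank $m$).
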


\section{Examples}\label{ex}

To compute some examples we want first to recall the theory presented in  \cite{KrzyzSzafr}. 

Take $\alpha=(\alpha_1,\ldots ,\alpha_k):\R^{n-k+1}\longrightarrow M_k(\R^n)$ 
 a polynomial mapping, $n-k$ even. By $\left [a_{ij}(x)\right ]$, 
$1\leq i\leq n$, $1\leq j\leq k$, we denote the matrix given by $\alpha(x)$ (i.e. $\alpha_j(x)$ 
stands in the $j$--th column). Then 
$$\widetilde{\alpha}(\beta, x)=
\beta_1\alpha_1(x)+\ldots +\beta_k\alpha_k(x)=\left [a_{ij}(x)\right ]\left [\begin{array}{c}
\beta_1\\
\vdots\\
\beta_k
\end{array} \right ]:\R^k\times \R^{n-k+1}\longrightarrow \R^n.$$
Let $I$ be the ideal in $\R[x_1,\ldots ,x_{n-k+1}]$ generated by all 
$k\times k$ minors of  $\left [a_{ij}(x)\right ]$, and $V(I)=\{x\in\R^{n-k+1}\ |\  h(x)=0\mbox{ for all }h\in I\}$.

Take 
$$m(x)=\det\left [\begin{array}{ccc}
a_{12}(x)&\ldots &a_{1k}(x)\\ \\
a_{k-1,2}(x)&\ldots &a_{k-1,k}(x) 
\end{array} \right ].$$

For $k\leq i\leq n$, we define
$$\Delta_i(x)=\det\left[\begin{array}{ccc}
a_{11}(x)&\ldots &a_{1k}(x) \\
 &\ldots& \\
a_{k-1,1}(x)&\ldots &a_{k-1,k}(x)\\
a_{i1}(x)&\ldots &a_{ik}(x)
\end{array} \right ]\ .$$

Put $\Aa=\R[x_1,\ldots,x_{n-k+1}]/I$. Let us assume that $\dim\Aa<\infty$, so that $V(I)$ is finite.
For $h\in\Aa$, we denote by $T(h)$ the trace of the linear endomorphism
$\Aa\ni a\mapsto h\cdot a\in\Aa$. Then $T:\Aa\rightarrow\R$ is a linear functional.

Let $u\in\R[x_1,\ldots,x_{n-k+1}]$. Assume that $\bar{U}=\{x\ |\ u(x)\geq 0\}$ is bounded and
$\nabla u(x)\neq 0$ at each $x\in u\inv(0)=\partial U$. Then $\bar{U}$ is a compact manifold with boundary, and $\dim \bar{U}=n-k$. 

Put $\delta=\partial(\Delta_k,\ldots,\Delta_n)/\partial(x_1,\ldots,x_{n-k+1})$. With $u$ and $\delta$ we associate
quadratic forms $\Theta_\delta,\ \Theta_{u\cdot \delta}:\Aa\rightarrow\R$
given by $\Theta_\delta(a)=T(\delta\cdot a^2)$ and $\Theta_{u\cdot \delta}(a)=T(u\cdot\delta\cdot a^2)$.

\begin{theorem}\cite[Theorem 3.3]{KrzyzSzafr} If $n-k$ is even, 
$\alpha=(\alpha_1,\ldots ,\alpha_k):\R^{n-k+1}\longrightarrow M_k(\R^n)$ 
is a polynomial mapping such that $\dim\Aa<\infty$, 
$I+\langle m \rangle=\R[x_1,\ldots ,x_{n-k+1}]$ and quadratic forms 
$\Theta_\delta,\, \Theta_{u\cdot\delta}:\Aa\longrightarrow\R$ are non--degenerate, then 
the restricted mapping $\alpha|_{\partial U} $ goes into $\widetilde V_k(\R^n)$ and
$$\Lambda(\alpha|_{\partial U})=\frac{1}{2}\deg(\widetilde{\alpha}|_{S^{k-1}\times \partial U})=
\frac{1}{2}(\signature\Theta_{\delta}+\signature\Theta_{u\cdot\delta}),$$ 
where $\widetilde{\alpha}(\beta,x)=\beta_1\alpha_1(x)+\ldots +\beta_k\alpha_k(x)$.
\end{theorem}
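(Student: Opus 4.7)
The plan is to reduce $\deg(\widetilde{\alpha}|_{S^{k-1}\times\partial U})$ to a sum of local topological degrees at the interior zeros of $\widetilde{\alpha}$ on $S^{k-1}\times U$, and then to rewrite that sum as a signature via the Eisenbud--Levine--Khimshiashvili (EKL) trace-form formula applied to the global Artinian algebra $\Aa$. Because $\widetilde{\alpha}(\beta,x)=0$ iff the columns $\alpha_1(x),\ldots,\alpha_k(x)$ are linearly dependent, the set of zeros projects to $V(I)$. The hypothesis $I+\langle m\rangle=\R[x_1,\ldots,x_{n-k+1}]$ forces $m(x)\neq 0$ on $V(I)$, so $\ker[a_{ij}(x)]$ is one-dimensional and Cramer's rule (using $m(x)$) determines $\beta$ up to a single scalar. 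Above each $x\in V(I)\cap U$ there therefore sit exactly two antipodal zeros $(\pm\beta(x),x)$, and $\partial U$ carries none, giving
\[
\deg(\widetilde{\alpha}|_{S^{k-1}\times\partial U})=\sum_{x\in V(I)\cap U}\bigl(\mathrm{ind}_{(\beta(x),x)}\widetilde{\alpha}+\mathrm{ind}_{(-\beta(x),x)}\widetilde{\alpha}\bigr).
\]

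Next I would compute each local index. On a chart of $S^{k-1}$ where $\beta_1\neq 0$ I take $(\beta_2,\ldots,\beta_k;x_1,\ldots,x_{n-k+1})$ as oriented coordinates, and similarly on the antipodal chart. A block expansion of the $n\times n$ Jacobian of $\widetilde\alpha$ at $(\beta(x),x)$ isolates the $(k-1)\times(k-1)$ minor $m(x)$ in the block of $\beta_2,\ldots,\beta_k$ derivatives; eliminating those columns leaves the $(n-k+1)\times(n-k+1)$ Jacobian of $(\Delta_k,\ldots,\Delta_n)$ with respect to $(x_1,\ldots,x_{n-k+1})$, i.e.\ $\delta(x)$. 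This yields $\mathrm{ind}_{(\beta(x),x)}\widetilde{\alpha}=\epsilon\cdot\sgn\delta(x)$ for a sign $\epsilon$ depending only on universal row/column permutations. The parity assumption $n-k$ even is exactly what makes $\mathrm{ind}_{(-\beta(x),x)}\widetilde{\alpha}=\mathrm{ind}_{(\beta(x),x)}\widetilde{\alpha}$ (the antipodal map combined with the $\beta$-linearity of $\widetilde{\alpha}$ contributes $(-1)^{n-k}=+1$); if $n-k$ were odd the two contributions would cancel. Summing gives $\tfrac{1}{2}\deg(\widetilde{\alpha}|_{S^{k-1}\times\partial U})=\sum_{x\in V(I)\cap U}\sgn\delta(x)$.

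Finally I would invoke EKL globally. Since $\dim\Aa<\infty$, the algebra $\Aa$ decomposes as a product of local algebras at the points of $V(I)$, and nondegeneracy of $\Theta_\delta$ and $\Theta_{u\delta}$ forces $\delta(x)\neq 0$ and $u(x)\neq 0$ on $V(I)$. The standard trace-form signature formula then gives
\[
\signature\Theta_\delta=\sum_{x\in V(I)}\sgn\delta(x),\qquad \signature\Theta_{u\delta}=\sum_{x\in V(I)}\sgn u(x)\cdot\sgn\delta(x).
\]
Since $\sgn u(x)=+1$ on $U$ and $-1$ off $\bar U$, adding the two signatures doubles the sum over $V(I)\cap U$ and cancels the exterior contributions, so $\tfrac{1}{2}(\signature\Theta_\delta+\signature\Theta_{u\delta})=\sum_{x\in V(I)\cap U}\sgn\delta(x)$, which matches the expression derived in the preceding paragraph.

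The main obstacle is the Jacobian bookkeeping in the second step: pinning down the universal sign $\epsilon$ in correctly oriented coordinates on $S^{k-1}\times\R^{n-k+1}$, verifying that the column operations really do produce exactly $m(x)\cdot\delta(x)$ times the standard volume form up to a globally constant sign, and confirming that the parity of $n-k$ controls whether the two antipodal indices add or cancel. Once these signs are under control, the third step is a direct application of the standard trace-form signature machinery on the Artinian algebra $\Aa$.
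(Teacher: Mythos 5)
This statement is quoted verbatim from \cite[Theorem 3.3]{KrzyzSzafr}; the present paper contains no proof of it, so the only possible comparison is with the argument in that reference. Your outline follows essentially the same route as the source: (i) the zeros of $\widetilde{\alpha}$ lie over $V(I)$, the hypothesis $I+\langle m\rangle=\R[x_1,\ldots,x_{n-k+1}]$ makes the kernel of $[a_{ij}(x)]$ one-dimensional there so that each $x\in V(I)\cap U$ carries exactly two antipodal zeros, and nondegeneracy of $\Theta_{u\delta}$ (which forces $u\delta$ to be a unit in $\Aa$) keeps $V(I)$ off $\partial U$, so the boundary degree is the sum of interior local indices; (ii) each local index is $\pm\sgn\delta(x)$ and the parity of $n-k$ makes the two antipodal contributions add rather than cancel; (iii) the Pedersen--Roy--Szpirglas/Becker--W\"ormann trace-form formula converts $\sum_{x\in V(I)\cap U}\sgn\delta(x)$ into $\tfrac{1}{2}(\signature\Theta_\delta+\signature\Theta_{u\delta})$. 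Steps (i) and (iii) are correct and complete as you state them.

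The one genuine gap is the one you flag yourself in step (ii), and it is not merely cosmetic. Your reduction of the Jacobian of $\widetilde{\alpha}$ to $\delta(x)$ glosses over two points. First, in the chart where $\beta_1\neq 0$ the $\beta$-derivative columns are $\alpha_j(x)-(\beta_j/\beta_1)\alpha_1(x)$, not $\alpha_j(x)$, so the $(k-1)\times(k-1)$ block is not literally $m(x)$; second, $D\Delta_i(x_0)$ equals (row $i$ of $\sum_j\beta_{0,j}\partial_x\alpha_j$) scaled by the \emph{norm and sign} of the Cramer kernel vector, plus a term lying in the column space of $A(x_0)$ that must be shown to die under the row reduction. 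The scalar relating $\beta_0$ to the Cramer vector changes sign between the two antipodal zeros, and $\sgn m(x_0)$ can enter with a power that must be checked to be even; until these factors are tracked, you cannot conclude that the local index is $+\sgn\delta(x)$ rather than $-\sgn\delta(x)$ or $\sgn(m(x))\sgn\delta(x)$, and the identity as stated (with a plus sign on the right) is not yet established. This bookkeeping is exactly the content of the local computation in \cite{KrzyzSzafr}, so your proposal is a faithful skeleton of the known proof with its hardest computational step left open.
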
 

Using the theory presented in \cite{KrzyzSzafr}, particularly \cite[Theorem 3.3]{KrzyzSzafr}, and computer system {\sc Singular} (\cite{GPS06}), one can apply the results from Sections \ref{sekcjaKiedy} and \ref{sekcjaSuma} 
to compute algebraic sum of cross-caps for polynomial mappings.

\begin{ex}
Let us take $f:\R^3\longrightarrow\R^5$ given by $$f(x,y,z)=(12y^2+z,6x^2+y^2+6y,18xy+13y^2+9x,$$$$8x^2z+10xz^2+5x^2+3xz,x^2y+4xyz+yz+4z^2).$$ 

Applying  Lemma \ref{kiedyCrossCup} and using  {\sc Singular} one can check that $f$ is $1$--generic. Moreover, according to Proposition \ref{oParasolach} and \cite{KrzyzSzafr}, one can check  that $$\zeta(f|_{\bar{B}^3(\sqrt{3})})=2,\quad   \zeta(f|_{\bar{B}^3(10)})=1.$$ We can also check that $f$ has $11$ cross-caps in $\R^3$, $6$ of them are positive, $5$ negative.
\end{ex}

\begin{ex}
Take $f:\R^5\longrightarrow\R^9$ given by $$f(s,t,x,y,z)=(y,z,t,20x^2+17sz+x,13sy+13sz+5t,25st+4x^2+28z,$$$$3x^2+19yz+22s,11ts^2+8t^2z+xz,27txy+9sxz+20st).$$ One may check that $f$ is $1$--generic, has $3$ cross-caps in $\R^5$ and $$\zeta(f|_{\bar{B}^3(1/10)})=0,\quad   \zeta(f|_{\bar{B}^3(2)})=-1,\quad   \zeta(f|_{\bar{B}^3(1000)})=1.$$ 
\end{ex}

\section{Intersection number of immersions}

As in previous Sections we assume that $m$ is odd. Take $g=(g_1,\ldots, g_{2m-2}):\R^{m}\longrightarrow\R^{2m-2}$ a smooth map. Denote by $\omega=x_1^2+\ldots +x_{m}^2$. Then $S^{m-1}(r)=\{x|\ \omega(x)=r^2\}$. According to \cite[Lemma 18]{KarNowSzafr}, $g|_{S^{m-1}(r)}$ is an immersion if and only if $$\rank \left[\begin{array}{ccc}
2x_1&\ldots &2x_{m}\\
\frac{\partial g_1}{\partial x_1}(x)&\ldots &\frac{\partial g_1}{\partial x_{m}}(x)\\
&\ldots&\\
\frac{\partial g_{2m}}{\partial x_1}(x)&\ldots &\frac{\partial g_{2m}}{\partial x_{m}}(x)
\end{array}\right]=m,$$  for $x\in S^{m-1}(r)$.

Take $0<r_1<r_2$, such that $g|_{S^{m-1}(r_1)}$ and $g|_{S^{m-1}(r_2)}$ are immersions. Denote by $P=\{x|\ r_1^2\leq w(x)\leq r_2^2\}$. Then $P$ is an $m$--dimensional oriented manifold with boundary. Then $(\omega,g):\R^m\longrightarrow\R^{2m-1}$ is a regular map in the neighborhood of $\partial P$.  Let us define $\widetilde{\alpha}:S^{m-1}\times P\longrightarrow\R^{2m-1}$ as $$\widetilde{\alpha}(\beta,x)=\left[\begin{array}{ccc}
2x_1&\ldots &2x_{m}\\
\frac{\partial g_1}{\partial x_1}(x)&\ldots &\frac{\partial g_1}{\partial x_{m}}(x)\\
&\ldots&\\
\frac{\partial g_{2m}}{\partial x_1}(x)&\ldots &\frac{\partial g_{2m}}{\partial x_{m}}(x)
\end{array}\right]\left[\begin{array}{c}
\beta_1\\\vdots\\\beta_{m}
\end{array}\right]$$ 

\begin{prop} Let us assume that $g|_{S^{m-1}(r_1)}$ and $g|_{S^{m-1}(r_2)}$ are immersions, then
$$I(g|_{S^{m-1}(r_2)})-I(g|_{S^{m-1}(r_1)})=\zeta((\omega,g)|_P).$$
\end{prop}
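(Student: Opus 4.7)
The plan is to apply Proposition~\ref{oParasolachGeneralnie} to $h=(\omega,g)\colon\R^m\to\R^{2m-1}$ on $\overline{P}$ and then to interpret the resulting boundary degree as a difference of intersection numbers. First I would verify that $h$ is regular in a neighbourhood of $\partial P$: $Dh(x)$ has $\nabla\omega=(2x_1,\ldots,2x_m)$ as its first row and the rows of $Dg(x)$ below it, so it coincides with the matrix from \cite[Lemma~18]{KarNowSzafr}. Hence $\rank Dh(x)=m$ on each sphere $S^{m-1}(r_i)$ precisely because $g|_{S^{m-1}(r_i)}$ is an immersion ($i=1,2$), which gives regularity of $h$ near $\partial P$.

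Proposition~\ref{oParasolachGeneralnie} then yields
\[
\zeta\bigl((\omega,g)|_P\bigr) = -\tfrac{1}{2}\deg\bigl(\widetilde{\alpha}|_{S^{m-1}\times\partial P}\bigr).
\]
Since the outward normal of $P$ on the inner sphere is $-x/|x|$, the induced boundary orientation is $\partial P = S^{m-1}(r_2)\sqcup\bigl(-S^{m-1}(r_1)\bigr)$, and the degree splits as
\[
\deg\bigl(\widetilde{\alpha}|_{S^{m-1}\times\partial P}\bigr) = \deg\bigl(\widetilde{\alpha}|_{S^{m-1}\times S^{m-1}(r_2)}\bigr) - \deg\bigl(\widetilde{\alpha}|_{S^{m-1}\times S^{m-1}(r_1)}\bigr).
\]
The proposition will follow once one proves
\[
I\bigl(g|_{S^{m-1}(r)}\bigr) = -\tfrac{1}{2}\deg\bigl(\widetilde{\alpha}|_{S^{m-1}\times S^{m-1}(r)}\bigr)\qquad (r\in\{r_1,r_2\}).
\]

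The frame map $x\mapsto$ (columns of $Dh(x)$) defines $\alpha\colon S^{m-1}(r)\to\widetilde V_m(\R^{2m-1})$; in the notation of \cite{KrzyzSzafr} (the hypothesis that $n-k$ is even is satisfied because $m$ is odd, so $n-k=m-1$ is even), the right-hand side of the last displayed identity equals $-\Lambda(\alpha)$ by construction. Both $I$ and $\Lambda$ yield homomorphisms $\pi_{m-1}\widetilde V_m(\R^{2m-1})\cong\Z\to\Z$, so the identification reduces to a single sign check on a generator. This can be carried out either on an explicit model immersion arranged so that $(\omega,g)$ has a single cross-cap inside $P$, or more transparently via the Ikegami--Saeki formula recalled in the Introduction, which equates the sign of a cross-cap with the intersection number of the immersion on its link sphere. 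This sign-accurate comparison between $I$ and $\Lambda$ is the main obstacle; everything else is boundary orientation bookkeeping and a direct application of Proposition~\ref{oParasolachGeneralnie}.
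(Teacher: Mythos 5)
Your proposal follows the same route as the paper's proof: apply Proposition~\ref{oParasolachGeneralnie} to $(\omega,g)$ on $P$ (after checking regularity near $\partial P$ via \cite[Lemma~18]{KarNowSzafr}, exactly as the paper sets up before the statement), split $\deg(\widetilde{\alpha}|_{S^{m-1}\times\partial P})$ over the two boundary spheres with the orientation bookkeeping you describe, and then identify $-\frac{1}{2}\deg(\widetilde{\alpha}|_{S^{m-1}\times S^{m-1}(r)})$ with $I(g|_{S^{m-1}(r)})$. The one place you diverge is that last identification, which you correctly isolate as the crux but leave as an unexecuted ``sign check on a generator'': the paper disposes of it in one line by citing \cite[Theorem 4.2]{KrzyzSzafr} (applicable since $n=2m-1$, $k=m$, $n-k=m-1$ is even), so your homotopy-theoretic reduction through $\pi_{m-1}\widetilde{V}_m(\R^{2m-1})$ can simply be replaced by that citation. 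If you insisted on proving it yourself you would still have to justify that $I$ is additive and depends only on the stabilized homotopy class of the tangential frame (Whitney's regular-homotopy invariance of $I$ plus Smale--Hirsch), and then evaluate one generator --- which is essentially the content of the cited theorem, so nothing is gained. One point in your favour: the form of the identity you wrote, $I(g|_{S^{m-1}(r)})=-\frac{1}{2}\deg(\widetilde{\alpha}|_{S^{m-1}\times S^{m-1}(r)})$, is the one that makes the signs and the factor of $\frac12$ come out right; the paper's printed proof states $\deg(\widetilde{\alpha}|_{S^{m-1}\times S^{m-1}(r_i)})=I(g|_{S^{m-1}(r_i)})$, which is inconsistent with its own final two lines and must be read as the version you gave.
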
 

\begin{proof} Let us recall that  $m$ is odd. Then  $$\deg(\widetilde{\alpha},S^{m-1}\times P,0)=\deg(\widetilde{\alpha}|_{{S^{m-1}\times \partial P}})=$$$$=\deg(\widetilde{\alpha}|_{{S^{m-1}\times S^{m-1}(r_2)}})-\deg(\widetilde{\alpha}|_{{S^{m-1}\times S^{m-1}(r_1)}}).$$ According to \cite[Theorem 4.2]{KrzyzSzafr}, we get that  $$\deg(\widetilde{\alpha}|_{{S^{m-1}\times S^{m-1}(r_i)}})=I(g|_{S^{m-1}(r_i)}),$$ for $i=1,2$. Applying Proposition \ref{oParasolachGeneralnie} we get $\zeta((\omega,g)|_P)=-\frac{1}{2}\deg(\widetilde{\alpha}|_{S^{m-1}\times \partial P})$. And so  $$\zeta((\omega,g)|_P)=I(g|_{S^{m-1}(r_2)})-I(g|_{S^{m-1}(r_1)}).$$

\end{proof}

\begin{cor}
If $g|_{S^{m-1}(r)}$ is an immersion, then $$I(g|_{S^{m-1}(r)})=\zeta((\omega,g)|_{\bar{B}^m(r)}).$$
\end{cor}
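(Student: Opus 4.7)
The plan is to apply Proposition~\ref{oParasolachGeneralnie} directly to $U = B^m(r)$ with $h = (\omega, g)|_{\bar{B}^m(r)}$. First I would verify that $h$ is regular on a neighborhood of $\partial \bar{B}^m(r) = S^{m-1}(r)$: the hypothesis that $g|_{S^{m-1}(r)}$ is an immersion is, by the rank criterion recalled at the start of this section, precisely the statement that $D(\omega, g)(x)$ has maximal rank $m$ at every $x \in S^{m-1}(r)$, and by continuity of the rank together with compactness of $S^{m-1}(r)$ this condition persists on an open neighborhood of the boundary.

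With that in place, Proposition~\ref{oParasolachGeneralnie} yields
\[
\zeta((\omega, g)|_{\bar{B}^m(r)}) \;=\; -\tfrac{1}{2}\deg\bigl(\widetilde{\alpha}|_{S^{m-1} \times S^{m-1}(r)}\bigr),
\]
where $\widetilde{\alpha}(\beta, x) = D(\omega, g)(x)(\beta)$ is precisely the map defined above the previous Proposition. The final step is to identify the right-hand side with $I(g|_{S^{m-1}(r)})$ by the same invocation of \cite[Theorem~4.2]{KrzyzSzafr} that appears in the proof of the preceding Proposition—applied now to the single sphere $S^{m-1}(r)$ rather than to two nested spheres. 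Substituting into the displayed formula gives the claimed equality.

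The proof is really a simpler version of the preceding one, with the annulus $P$ replaced by the ball $\bar{B}^m(r)$, so no new ideas are required; the only point to check is that Proposition~\ref{oParasolachGeneralnie} indeed applies in this setting, which it does because $\bar{B}^m(r)$ is itself a compact $m$-manifold with boundary. The main obstacle, such as it is, is purely bookkeeping: one must match the sign/factor conventions in \cite[Theorem~4.2]{KrzyzSzafr} to those of Proposition~\ref{oParasolachGeneralnie} so that the factor $-\tfrac{1}{2}$ on the degree side cancels against the one coming from the degree-versus-intersection-number identification.

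Alternatively, one could derive the Corollary from the preceding Proposition by a limiting argument $r_1 \to 0^+$: pick small $r_1 \in (0,r)$ with $g|_{S^{m-1}(r_1)}$ still an immersion, zero intersection number, and no cross-caps enclosed in $\bar{B}^m(r_1)$, then use additivity of $\zeta$. This is slightly less clean, because locating such $r_1$ without additional assumptions on $Dg(0)$ requires further argument, whereas the direct application of Proposition~\ref{oParasolachGeneralnie} avoids the issue entirely.
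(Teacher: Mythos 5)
Your proposal is correct and follows essentially the route the paper intends: the Corollary is the proof of the preceding Proposition specialized to $U=\bar{B}^m(r)$, i.e.\ a direct application of Proposition~\ref{oParasolachGeneralnie} (regularity near $S^{m-1}(r)$ coming from the rank criterion of \cite[Lemma 18]{KarNowSzafr} plus openness of the maximal-rank condition) combined with \cite[Theorem 4.2]{KrzyzSzafr} on the single boundary sphere. Your remark that the limiting argument $r_1\to 0^+$ is problematic without control of $g$ near the origin is well taken, and the direct argument rightly avoids it.
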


Note that if the only singular points of $(\omega,g)|_{\bar{B}^m(r)}$ are cross-caps, then the intersection number of an immersion $g|_{S^{m-1}(r)}$ is equal to the algebraic sum of cross-caps of $(\omega,g)|_{\bar{B}^m(r)}$. Also, in generic case,the  difference between intersection numbers of immersions  $g|_{S^{m-1}(r_1)}$ and $g|_{S^{m-1}(r_2)}$, is equal to the algebraic sum of cross-caps of $(\omega,g)$ appearing in $P$.

Moreover, if $(\omega,g)$ has finite number of singular points, and all of them are cross-caps, then for any $R>0$ big enough, $g|_{S^{m-1}(R)}$ is an immersion with the same intersection number equal to the algebraic sum of cross-caps of  $(\omega,g)$.

\begin{ex}
Take $g:\R^3\longrightarrow\R^4$ given by $$g=(-3y^2+5yz-x+2,-4x^2+z^2+9y-6z+5,$$$$4x^2z-2x^2+2xy-y-3,3y^2z+xy-4yz+4x-5y-5),$$ and $\omega=x^2+y^2+z^2$. In the same way as in Section \ref{ex}, one may check that the only singular points of $(\omega,g)$ are cross-caps, moreover $(\omega,g)$ has $8$ cross-caps, $5$ of them are positive and $3$ negative. According to previous results $g|_{S^2(r)}$ is an immersion for all $r>0$, except at most $8$ values of $r$ and if  $g|_{S^2(r)}$ is an immersion, then $$-3\leq I(g|_{S^2(r)})\leq 5$$ and for  $R>0$ big enough $g|_{S^2(R)}$ is an immersion with $$I(g|_{S^2(R)})=2.$$

\end{ex}





\end{document}